\documentclass[11pt,a4paper]{article}
\usepackage{graphicx}
\usepackage{epsfig}
\usepackage{amssymb}
\usepackage{amsmath}
\usepackage{amsfonts}
\usepackage[latin1]{inputenc}
\usepackage{fancyhdr}
\usepackage{lastpage}
 \usepackage{float}


\textwidth 17cm
\textheight 23cm
\oddsidemargin 0.5cm

\addtolength{\voffset}{-2.4cm}
\addtolength{\hoffset}{-0.5cm}

\font\tenbi=cmmib10
\font\sevenbi=cmmib10 at 7pt
\newfam\bifam
\textfont\bifam = \tenbi
\scriptfont\bifam= \sevenbi
\scriptscriptfont\bifam= \sevenbi

\numberwithin{equation}{section}
\numberwithin{figure}{section}
\numberwithin{table}{section}

%
\font\tendb=msbm10
\font\sevendb=msbm7
\font\fivedb=msbm5
\newfam\dbfam
\textfont\dbfam = \tendb
\scriptfont\dbfam= \sevendb
\scriptscriptfont\dbfam= \fivedb

%
\font\twlrsfs=rsfs10
\font\egtrsfs=rsfs7
\font\sixrsfs=rsfs5
\newfam\rsfsfam
\textfont\rsfsfam=\twlrsfs
\scriptfont\rsfsfam=\egtrsfs
\scriptscriptfont\rsfsfam=\sixrsfs

\allowdisplaybreaks

\newcommand{\titlesize}{\fontsize{15}{20pt}\bfseries}
\newcommand{\titulo}[1]{\vspace*{10pt}\begin{center}
               \titlesize{#1}\par\vspace*{10pt}\normalfont}

\newcommand{\direccion}[1]{\vspace*{12pt}\footnotesize
                \textit{#1}\end{center}\par\vspace*{20pt}}
%
%

\newtheorem{theorem}{Theorem}[section]
\newtheorem{lemma}{Lemma}[section]

\newtheorem{remark}{Remark}[section]

\newcommand{\cqfd}{\mbox{}\nolinebreak\hfill\rule{2mm}{2mm}\medskip\par}

\newenvironment{proof}[1] {\par\noindent{\bf Proof. }{#1}}{\cqfd}
%
\def\f {{\boldsymbol f}}
\def\d {{\boldsymbol d}}

\def\w {{\boldsymbol w}}
\def\v {{\boldsymbol v}}

\def\x {{\boldsymbol x}}
\def\n {{\boldsymbol n}}

\def\m {{\boldsymbol m}}

\def\u {{\boldsymbol u}}
\def\uu{{\boldsymbol u}}
\def\U {{\boldsymbol U}}
\def\H {{\boldsymbol H}}
\def\L {{\boldsymbol L}}

\def\p {{\phi}}

\def\R {\mathbb{R}}

\def\Om {\Omega}
\def\na {\nabla}
\def\eps {\varepsilon}

\def \hueco{\noalign{\medskip}}
\def \pato{\forall\,}
\def \beq{\begin{equation}}
\def \eeq{\end{equation}}
\def \ba{\begin{array}}
\def \ea{\end{array}}
\def \dis{\displaystyle}

\newtheorem{definition}[theorem]{Definition}

\title{{\sc Approximation of Smectic-A liquid crystals}}

\author{{\sc Francisco Guill\'en-Gonz\'alez}\thanks{Departamento de Ecuaciones Diferenciales y An\'alisis Num\'erico, Universidad de Sevilla,
Aptdo.~1160, 41080 Seville, Spain, email: {\tt guillen@us.es}}
\quad
{\sc Giordano Tierra}\thanks{Mathematical Institute, Faculty of Mathematics 
and Physics, Charles University, Prague 8, 186 75, Czech Republic, 
e-mail: {\tt gtierra@karlin.mff.cuni.cz}}
}

\begin{document}
\date{}
\maketitle

\begin{abstract}

In this paper, we present energy-stable numerical schemes for a
Smectic-A liquid crystal model. This model involve the hydrodynamic
velocity-pressure macroscopic variables $(\u,p)$ and the microscopic
order parameter of  Smectic-A liquid crystals, where its molecules
have a uniaxial orientational order and a positional order by
layers of normal and unitary vector $\n$.

We start from the formulation given in \cite{E} by using the so-called
layer variable $\phi$ such that $\n=\nabla \phi$ and the level sets of $\phi$ describe the layer structure of the Smectic-A liquid crystal. 
Then,  a strongly non-linear
parabolic system is derived coupling velocity and pressure unknowns of the Navier-Stokes equations $(\u,p)$
with a fourth order parabolic equation for $\phi$.

We will give a reformulation as a mixed second order problem
which let us to define some new energy-stable 
numerical schemes, by using second order finite differences in time
and $C^0$-finite elements in space.
Finally, numerical simulations are presented for $2D$-domains, showing the evolution of the system until it reachs an equilibrium configuration.

Up to our knowledge, there is not any previous numerical analysis
for this type of models.

\end{abstract}

\noindent
{\bf Key words}: Liquid crystal, micro-macro model, second order time scheme,
finite elements, energy stability

\smallskip\noindent
{\bf Mathematics subject classifications (1991)}:   35Q35, 65M60, 76A15


%
\section{Introduction}
The topic of Liquid Crystals (LCs) is a multidisciplinary field
related to Chemistry, Engineering, Biology, Medicine, Physics and Mathematics. 
Usually, the discovery of liquid crystals is attributed to botanist F. Reinitzer who in 1888 found a substance that appeared to have two different melting points. A year later, O. Lehmann solved the problem with the description of a new state of matter midway between a liquid and a crystal.  In 1922, G. Friedel spoke for the first time about mesophases and in 1991 Pierre-Gilles de Gennes was awarded with Nobel Prize in Physics for his contributions related to LCs, in particular, for discovering that ''methods developed for studying order phenomena in simple systems can be generalized to more complex forms of matter, in particular to liquid crystals and polymers".
LCs are a state of matter that can be viewed as intermediate phases between solids and isotropic fluids.
Indeed, macroscopically such materials may flow like fluids but at microscopic scale their molecules have orientational properties (due to elasticity effects) and they can experience deformations as
elastic solids, hence LCs can be considered as anisotropic liquids.

Furthermore, LCs can be divided into thermotropic and lyotropic phases, where thermotropic phases change their state as the temperature is varying while lyotropic phase  change of state as concentration is varying.

Examples of LCs can be widely found in the natural world and in technological applications. For instance, most contemporary electronic devices use LCs for their displays and lyotropic liquid-crystalline phases can be found in living systems, forming proteins and cell membranes.

Within thermotropic LCs can be distinguished into two main different phases: Nematic and Smectic (see Figure~\ref{fig:LCS} for schematic description of Nematic and Smectic phases). 
In Nematic phases, the rod-shaped molecules have no positional order, but molecules self-align to have a long-range directional order with their long axes roughly parallel. Thus, the molecules are free to flow and their center of mass positions are randomly distributed as in a liquid, although they still maintain their long-range directional order. Moreover, most Nematics are uniaxial, i.e., they have one axis that is longer and preferred.

In Smectic phases (which are found at lower temperatures than the Nematic ones) molecules form well-defined layers that can slide over one another, i.e., Smectics are thus positionally ordered along one direction and are liquid-like within the layers. In particular, in Smectic-A phases, molecules are oriented along the normal vector of the layer, while in Smectic-C phases they are tilted away from the normal vector of the layer. We refer the reader to \cite{Collings} for further information on the physics and properties of the different LCs that can be found in the nature.

\begin{figure}[H]
\begin{center}
\includegraphics[width=0.8\textwidth]{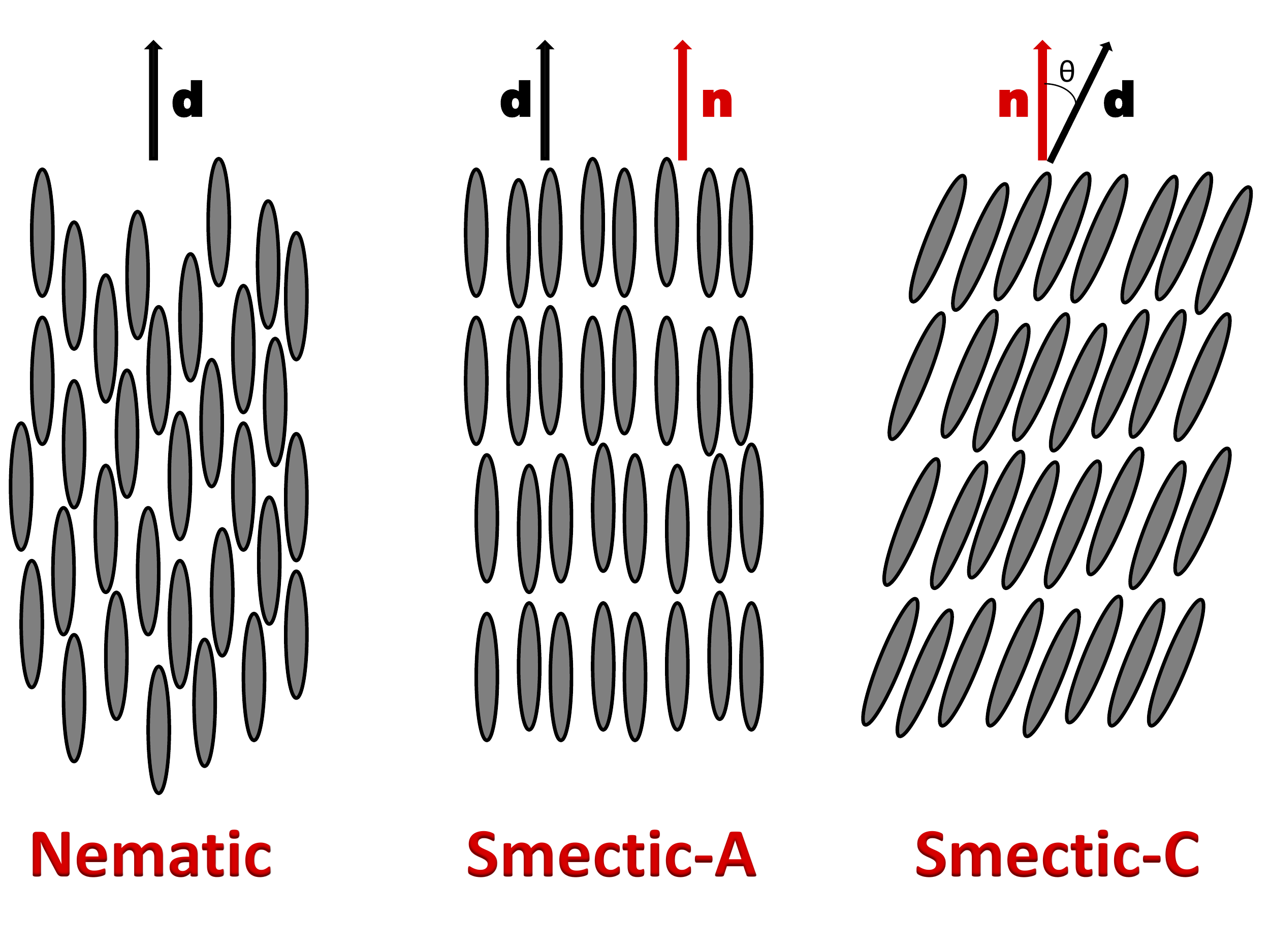}
\caption{Different phases of Liquid Crystals}\label{fig:LCS}
\end{center}
\end{figure}

The dynamics interaction between the macroscopic level
and the microscopic order of molecules are modeled with
(strongly nonlinear) parabolic PDE systems,  involving:
  \begin{itemize}
  \item
  the macroscopic fluid dynamics in velocity-pressure variables of the  Navier-Stokes type,
  \item
  a microscopic order variable modeled by a (vectorial) gradient flow system.
   \end{itemize}
  
 We assume LCs confined in an open bounded domain $\Omega \subset \R^N $(N = 2 or 3) with boundary $\partial\Omega$ which is thermally isolated during the time interval $[0,+\infty)$. Then, the macroscopic dynamic can be described by the velocity-pressure variables $\uu : \Omega \times [0, +\infty) \rightarrow \R^N$ and $p : \Omega \times [0, +\infty)\rightarrow \R$ respectively. For isotropic fluids, these variables are governed by the Navier-Stokes equations, but in LCs the anisotropic microscopic configurations modify the macroscopic dynamics, and reciprocally, the macroscopic movement has an influence on the orientation of molecules.

The mathematical systems related with LCs have been widely study in recent times, where most of these works are devoted to study Nematic phases. A simplified model for Nematic LCs was introduced by Lin in \cite{lin} and studied  by Lin and Liu in \cite{linliu1,linliu2} and by Coutand and Shkoller in \cite{Coutand}.
 A simplified model for Smectic-A LC was proposed by E in \cite{E} and studied by Liu in \cite{liu}, by Climent and Guill\'en in \cite{regul-smectic,cli_gui_14} and by Wu and Segatti in \cite{wu-segatti}. Both are simplified models from the original equations in the continuum theory of liquid crystals due to Ericksen and Leslie, which was developed during the period of 1958 through 1968. We refer the reader to \cite{review_LC} for a detailed review about the main results in the mathematical analysis of these models.

From the numerical analysis and simulations point of view, most of the effort of researchers have been focused on approximating Nematic phases. We recommend the reader \cite{Badia-Guillen-Gutierrez,GK} (and the references therein) as state-of-the-art reviews of numerical schemes to approximate Nematic LCs and other energy-based models.
For Smectic phases, the literature concerning the development of numerical schemes (and the study of their properties) is still very scarce. The main concern of this paper is to fulfill this lack of numerical approximations of Smectic phases, presenting for the first time energy-stable numerical schemes to approximate these systems.

This work is organized as follows: In Section~\ref{sec:SmeA}, we recall the theory of Smectic-A, starting with the static Oseen-Frank's  theory and then detailing the dynamics of these systems. Afterwards, we introduce a reformulation of the system that allows the spatial approximation of the system considering $C^0$ finite elements.

Then, numerical schemes to approximate this reformulation of the system are presented in Section~\ref{sec:generic-scheme}. We first introduce a finite element scheme for the spatial approximation and afterwards we present a generic second order finite differences approximation in time.

We have carried out numerical simulations to study the performance of the proposed schemes. In particular, we have detailed the results of these computations in Section~\ref{sec:num_sim}.
Finally, we state the conclusion of our work in Section~\ref{sec:con}.
\section{Smectic-A Liquid Crystals}\label{sec:SmeA}
\subsection{Static Oseen-Frank's  theory of Smectic-A LCs}
Let $\Om\subset \R^d$ ($d=2,3$) be the bounded domain occupied by
the LC, with boundary $\partial\Om$. The equilibrium states correspond to a
minimum of a (elastic) free energy governing the system. An usual form for this energy is the \emph{Oseen-Frank} free energy:
\beq
E_{ela}(\d,\nabla\d) = \int_\Om \left(\frac{k_1}{2}(\na\cdot\d)^2
+\frac{k_2}{2} (\d\cdot(\na\times\d)) ^2
+\frac{k_3}{2}|\d\times(\na\times\d)|^2\right) d\x,
\eeq
where $\d=\d(x)$ is the director vector and   $k_i>0$ are elastic constants (hereafter, $|\cdot|$ denotes the euclidean norm). A further simplification is the equal
constant  case $k_1=k_2=k_3(=1)$, where $E_{ela}$ reduces to the Dirichlet energy functional:
\beq
  E_{ela}=\frac{1}{2}\int_\Om  (\nabla\cdot\d)^2 d\x.
\eeq
For uniaxial Smectic LCs, the molecules are arranged in almost
incompressible layers (whose normal vector is denoted by $\n$) of almost constant
width. Moreover, each layer consists
of a single optical axis $\d$ perpendicular to the layer.  In particular, for Smectic-A LCs, both unitary vectors coincide 
\beq
\d =\n.
\eeq
On the other hand, due to the incompressibility of the layers
\beq
\nabla\times\n=0,
\eeq
there exists a potential function $\p:\Om\to \R$ such that 
\beq
\n=\nabla\p,
\eeq
 and the level sets of $\p$ will represent the layer structure. Moreover, if the domain $\Om$ has a simply connected boundary $\partial\Om$, then $\p$ can be computed via the Poisson-Neumann problem:
\beq
-\Delta \p = -\na\cdot \n \quad \hbox{in $\Om$},\quad \na\p\cdot\m =\n\cdot\m \quad \hbox{on $\partial\Om$},
\eeq
where 
 $\m$ denotes the exterior normal vector to $\partial\Om$.

Since $\d = \n$,  $\na\times \n=0$ and $\n=\na\p$, the (elastic) Dirichlet
functional can be reduced to
\beq
E_{ela}=\frac{1}{2}\int_\Om (\nabla\cdot\n)^2 d\x=\frac{1}{2} \int_\Om  (\Delta\p)^2 d\x
\eeq
and the static minimization problem has a convex functional but with
a non-convex constraint:
\beq
\min_\p  E_{ela}(\Delta\p)\quad \hbox{subject to}\quad
|\na\p |=1.
\eeq
The optimality system associates to this minimization problem reads:
\beq
\frac{\delta E_{ela}}{\delta \p}:=\Delta^2\p -\na\cdot(\xi \na \p)=0, \quad |\na\p |=1,
\eeq
where $\xi$ is a Lagrange multiplier. In order to relax the non-convex constraint 
$|\na\p |=1$, it is usual to consider the following regularized energy (by considering a
penalization of the Ginzburg-Landau type):
\beq
  E(\p)=E_{ela}(\Delta\p) + E_{pen}(\na\p)
   := \frac{1}{2} \int_\Om|\Delta\p|^2
  + \frac{1}{\eps^2} \int_\Om  F(\na\p),\quad
  F(\n)=\frac{1}{4}(|\n|^2-1)^2.
\eeq
Then the relaxed  minimization problem is now a problem without constraints but for a non-convex functional:
\beq
\min_\p  E(\p).
\eeq
The optimality system associated to this problem reads
\beq
\left\langle
\frac{\delta E(\p)}{\delta\p}
\right\rangle
=\int_{\Om}\Delta\p\Delta\overline\p
+\frac1{\varepsilon^2}\f(\nabla\p)\cdot \na\overline\p=0
\quad\pato\overline\p,
\eeq
where $\f( \n)=\na_\n F(\n)=(|\n|^2-1)\n$. In order to show which boundary conditions are admissible, we need to split the computations into two different steps:
\begin{enumerate}
\item Assume $\Delta\p|_{\partial\Om}=0$ [N1] or $\nabla\overline\p\cdot\m|_{\partial\Om}=0$ [D2], arriving at
\beq\label{aux-var}
\left\langle
\frac{\delta E(\p)}{\delta\p}
\right\rangle
=-\int_{\Om}\w\cdot\nabla\overline\p,
\quad
\w:=\nabla\Delta\p - \frac1{\varepsilon^2}\f(\nabla\p).
\eeq

\item Then, assume $\w\cdot\m|_{\partial\Om}=0$ [N2] or $\overline\p|_{\partial\Om}=0$ [D1], to arrive at
\beq\label{def-EL(phi)}
\frac{\delta E(\p)}{\delta\p}
=\nabla\cdot\w
=\Delta^2\p - \frac1{\varepsilon^2}\nabla\cdot\f(\nabla\p)
\quad
\mbox{ in }
\Om.
\eeq

\end{enumerate}
Therefore, the admissible combinations for boundary conditions are:
\beq\label{eq:possi_BCs}
\left\{\ba{lll}
\mbox{[D1-D2]} & \p|_{\partial\Om}=\p_1, & \nabla\p\cdot\m|_{\partial\Om}=\p_2,
\\ \hueco
\mbox{[D1-N1]} & \p|_{\partial\Om}=\p_1, & \Delta\p|_{\partial\Om}=0,
\\ \hueco
\mbox{[D2-N2]} & \nabla\p\cdot\m|_{\partial\Om}=\p_2, & \w\cdot\m|_{\partial\Om}=0,
\\ \hueco
\mbox{[N1-N2]} & \Delta\p|_{\partial\Om}=0, & \w\cdot\m|_{\partial\Om}=0.
\ea\right.
\eeq
Note that pairs [D1-N2] and [D2-N1] are not admissible boundary conditions in order to get \eqref{def-EL(phi)}.




\subsection{Dynamics of Smectic-A LCs.}
We are interested in the dynamics of Smectic-A LCs confined in the domain $\Omega $ during a time interval $(0,T)$.  

The macroscopic dynamic variables are denoted by $(\u,p)$, that represents the incompressible velocity
and the pressure of the flow, respectively. For the well known case of isotropic
newtonian fluids (assuming constant density), the equilibrium of forces is modeled in terms of these macroscopic
variables  $(\u,p)$, arriving at the PDE system (linear momentum system):
\beq
\frac{ D\u}{ Dt} -\na\cdot \sigma =0,\quad \na\cdot\u=0 \quad
\hbox{in $\Omega\times(0,T)$,}
\eeq
where $\dfrac{ D\u}{ Dt}=\u_t+ (\u\cdot\na)\u$ is the
convective time derivative (the derivative along the streamlines),
 $\sigma$ is the  Cauchy stress tensor given by the so-called Stokes' law:
\beq
\sigma=-p\mathcal{I} +2\nu D(\u), 
\eeq
where $\mathcal{I}$ represents the $N$-dimensional identity matrix and $D(\u)=(\na\u+\na\u^t)/2$ is the deformation tensor (symmetric part of $\na\u$).

  For  Smectic-A LCs, we will consider the model proposed by E \cite{E},
  where the elastic (and  dissipative)
influence of the order vector $\n$ in the linear momentum system is modeled by the following Cauchy stress tensor:
\beq
\sigma=-p\, \mathcal{I} +\sigma^d(D(\u),\n) + \sigma^e(\n),
\quad \n=\na\p, 
\eeq
where
 $\sigma^d=\sigma^d(D(\u),\n)$ is the dissipative (symmetric) stress tensor defined as:
\begin{equation}\label{dissipative-tensor}
 \sigma^d=\mu_1(\n^tD(\u)\n)\n\otimes\n+\mu_4D(\u) +\mu_5(D(\u)\n
\otimes\n+\n\otimes D(\u)\n ),
\end{equation}
and
 $\sigma^e$ is the (nonsymmetric) elastic tensor, defined as:
 \beq
 \sigma^e
 = -\na\p \otimes \w + \Delta\p\na(\na\p),
 \eeq
 where $\w$ was defined in (\ref{aux-var}) and $\otimes$ denotes the tensorial product.
Then, it is possible to derive a PDE system for Smectic-A LC:
\begin{itemize}

\item[$\blacktriangleright$] Conservation of linear momentum derives to the  $(\u,p)$-system:
\beq
\frac{ D\u}{ Dt}  +\na p -\na\cdot (\sigma^d + \lambda\,
\sigma^e)=0,\quad \na\cdot\u=0 \quad \hbox{in $\Omega\times(0,T)$,}
\eeq
where $\lambda>0$ is a constant coefficient coupling the kinetic
and the elastic energy.

\item[$\blacktriangleright$] Conservation of angular momentum derives the $\p$-equation:
\beq
\frac{ D \p}{ Dt} + \gamma \, \frac{\delta E}{\delta \p}=0 \quad \hbox{in $\Omega\times(0,T)$,}
\eeq
where $\gamma>0$ is a constant coefficient (elastic relaxation time) and $\dfrac{\delta E}{\delta \p}$ is defined in (\ref{def-EL(phi)}). Note that this equation can be viewed as a Allen-Cahn type from the phase-field framework.
\end{itemize}
The elastic tensor $\sigma^e(\n)$ must be  chosen in order to assume
that the dynamics is governed by the (dissipative) energy equality:
\beq\label{energy_equality}
\frac{d}{dt}\Big( E_{kin}(\u)+\lambda\, E(\p)  \Big)
+\int_\Om \sigma^d(D(\u),\n) : D(\u) + \lambda\gamma\int_\Om \left|\frac{\delta E}{\delta \p}\right|^2 = 0,
\eeq
where 
$ E_{kin}(\u)=\frac{1}{2}\int_\Om|\u|^2$ is the kinetic energy and 
\beq
\sigma^d(D(\u),\n):D(\u)=\mu_1 (\n^t D(\u) \n)^2+\mu_4 |D(\u)|^2+\mu_5 | D(\u)\n|^2\ge \mu_4 |D(\u)|^2.
\eeq
 Assuming time-independent boundary conditions for $\p$, that is $\p_t|_{\partial\Om}=0$ and $\na\p_t\cdot\m|_
{\partial\Om}=0$, this energy equality is deduced considering 
\beq
\int_\Om (\u\hbox{-system})\cdot \u+ \lambda\int_\Om(\p\hbox{-equation}) \frac{\delta E}{\delta \p},
\eeq
  %
%
 because the following equality holds \cite{E}:
\beq
-\na\cdot\sigma^e = - \frac{\delta E(\p)}{\delta \p} \na\p + \na (E(\p)),
\eeq
hence 
\beq
-\int_\Om (\na\cdot\sigma^e) \cdot \u + \int_\Om \u \cdot  \na\p \, \frac{\delta E}{\delta \p} =0.
\eeq
Accordingly, the differential problem  reads:
$$
\left\{
\begin{array}{rlll}
        \displaystyle\frac{D\u}{Dt}
       -  \na\cdot\sigma^d(D(\u),\n)+\nabla \widetilde p
       - \lambda \,\frac{\delta E}{\delta \p} \nabla\p
         & = & {\boldsymbol 0}&\mbox{ in $\Omega\times(0,T)$,}
         \\ \hueco
        \nabla\cdot{\boldsymbol u} & = &  0&\mbox{ in $\Omega\times(0,T)$,}
        \\ \hueco
	\dfrac{D\p}{Dt}
        + \gamma\,\dfrac{\delta E}{\delta \p} &=& 0 &\mbox{ in $\Omega\times(0,T)$,}     
        \end{array}
        \right.
\leqno{(P)}
$$
where  $\sigma^d(D(\u),\n)$ is the symmetric tensor defined in (\ref{dissipative-tensor}), $\widetilde p=p+\lambda E$ is a modified potential function (that for simplicity it is  renamed as $p$) and the expression of $\dfrac{\delta E}{\delta \p}$ depending on $\phi$ is given in \eqref{def-EL(phi)}. This differential problem must be ended with initial conditions
$$
{\boldsymbol u}_{|t = 0 } ={\boldsymbol u}_0,
        \quad \p_{|t = 0 }  =  \p_0 \quad \mbox{in $\Omega$,}
$$
and the non-slip condition $ {\boldsymbol u}= 0$ on  $\partial\Omega\times(0,T)$ and 
  one admissible  boundary conditions for $\p$ given in \eqref{eq:possi_BCs}. For instance, $\w\cdot\m|_{\partial\Om}=0$ (that is [N2]) implies the conservation property $\dis\dfrac{d}{dt}\int_\Om \p=0$.
 
 Note that  the equilibrium solutions ($\u=0$ and $\p^*$ satisfying $\dfrac{\delta E}{\delta \p}(\p^*)=0$ and the corresponding boundary conditions for $\p^*$) are  equilibrium solutions associated to system $(P)$, where the pressure  reduces to:
$$ \widetilde p= p + \lambda\, E(\p^*)= \rm constant .  $$

The fact that  system $(P)$ satisfies the (dissipative) energy law \eqref{energy_equality}, and therefore the free energy $ E_{kin}(\u) + \lambda\, E (\p)$ has a physical dissipation, play an essential role in the main mathematical results of problem $(P)$, which can be summarized as follows (see \cite{review_LC} for a review on mathematical analysis of Nematic and Smectic-A LCs):
\begin{itemize}

\item[$\blacktriangleright$] ~\cite{liu} Imposing time-independent Dirichlet boundary conditions
  for $\p$, one has
  \begin{enumerate}
\item 
existence of
global in time weak solutions of $(P)$, satisfying the regularity 
\beq
\p\in L^{\infty}(0,T; H^2(\Omega))\cap L^2(0,T;H^4(\Omega)), 
\eeq
\beq
\quad {\bf u}\in L^{\infty}(0,T; \L^2(\Omega))\cap
L^2(0,T;\H^1(\Omega)),
\eeq
and   the energy inequality associated to \eqref{energy_equality} (changing in  \eqref{energy_equality} the equality $=0$ by the inequality $\le 0$).
\item
existence (and uniqueness) of local  in time regular solutions, which is  global in time for large viscosity coefficient  $\mu_4 $  (dominant viscosity case),
\item
convergence  by subsequences at infinite time towards  equilibrium solutions  $(0,\p^\star)$ (with $\p^ \star $ being a solution of $\frac{\delta E}{\delta \p}(\p^\star)=0$).
\end{enumerate}
These results extend the same type of results already obtained for nematic liquid crystal in \cite{linliu1,linliu2}.

\item[$\blacktriangleright$]  ~\cite{regul-smectic} Imposing time-dependent Dirichlet conditions for $\p$, one has existence of time-periodic weak solutions,
which are regular for large viscosity   $\mu_4 $.  

  These results extend the same type of results already obtained for nematic liquid crystal \cite{reprod,regul-nematic}.
  
\item[$\blacktriangleright$]~\cite{wu-segatti}  Imposing periodic boundary conditions for $\p$, one has convergence to a unique equilibrium solution of the whole sequence $(\u(t),\p(t))$ as time $t\to +\infty$.

\item[$\blacktriangleright$]~\cite{cli_gui_14}  Imposing time-independent Dirichlet  conditions for $\phi$, one has convergence to a unique equilibrium solution of the whole sequence $(\u(t),\p(t))$ as time $t\to +\infty$.


\end{itemize}

\subsection{Reformulation as a mixed second order problem}
In this section, we derive a new formulation of system $(P)$ that will allow us to consider numerical schemes in the $C^0$ finite element framework.
For simplicity,  we will consider $2D$ domains and the [D2-N2] boundary conditions for $\p$, although it is not difficult to extend the same arguments to the $3D$ case and other admissible boundary conditions for $\p$, see \eqref{eq:possi_BCs} above.

We start introducing a new unknown
\beq
\psi:=-\Delta\p.
\eeq 
Hence, we can rewrite the elastic energy in terms of $\psi$, i.e.,
\beq
E_{ela}
=\frac{1}{2} \int_\Om  (\Delta\p)^2 d\x
=\frac{1}{2} \int_\Om  \psi^2 d\x.
\eeq
Accordingly, we can write the following reformulation of problem
$(P)$ for unknowns $(\u, p,\p,\psi)$:
$$
\left\{
\begin{array}{rlll}
\displaystyle\frac{D\u}{Dt}
       - \na\cdot\sigma^d(D(\u),\nabla\p)+\nabla p
       + \frac\lambda\gamma \,\Big(\p_t +{\boldsymbol u}\cdot  \nabla\p\Big)\nabla\p
         & = & {\boldsymbol 0}&\mbox{ in $\Omega\times(0,T)$,}
        \\ \hueco
        \nabla\cdot{\boldsymbol u} & = &  0&\mbox{ in $\Omega\times(0,T)$,}
         \\ \hueco\dis
        \frac{1}{\gamma}\Big(\p_t +{\boldsymbol u}\cdot  \nabla\p\Big)
        -\Delta \psi-\dfrac1{\varepsilon^2}\na\cdot\f(\nabla\p) &=&0&\mbox{ in $\Omega\times(0,T)$,}
        \\ \hueco
        -\Delta \p - \psi & = &  0&\mbox{ in $\Omega\times(0,T)$,}
       \\ \hueco
        {\boldsymbol u}= 0,\quad \nabla\p\cdot\m=0, \quad  (\na\p+\dfrac1{\varepsilon^2}\f(\na\p))\cdot\m&=&0,
 &\mbox{ on $\partial\Omega\times(0,T)$,}
        \\ \hueco
        {\boldsymbol u}_{|t = 0 } ={\boldsymbol u}_0,
        \quad \p_{|t = 0 }& = & \p_0&\mbox{ in $\Omega$.}
        \end{array}
        \right.
\leqno{(RP)}
$$


\section{Numerical approximations}\label{sec:generic-scheme}
In this section, we will introduce a numerical scheme to approximate system $(RP)$. In particular we present a $C^0$-finite element approximation in space and a second-order finite difference scheme in time.
\subsection{A generic FEM space-discrete scheme}
Firstly, we will define a space-discrete scheme using $C^0$-finite elements. 
Let $\{{\cal{T}}_h \}_{h>0}$ be a  regular triangulations family  of $\Omega$ with $h$ denoting the mesh size. Then, the unknowns $(\u,p,\p,\psi)$ are approximated by
the conforming  finite element spaces:
\beq
(\U_h, P_h,\Phi_h,\Psi_h)\subset(\H^1(\Omega),L^2(\Omega), H^1(\Omega), H^1(\Omega) )
\eeq
 via the following  mixed variational
formulation of $(RP)$:
Find
\beq
(\u(t),p(t),\p(t),\psi(t))\in
\U_{h0}\times P_{h0}\times  \Phi_h\times  \Psi_{h}
\eeq
such that 
\beq\label{space-scheme}
\left\{\begin{array}{rcl}
\displaystyle\Big(\u_t,\bar\u\Big) 
+c\Big(\u,\u,\bar \u\Big)
+ \Big(\sigma^d(D(\u),\nabla\p),D(\bar\u)\Big)
-\Big(p,\nabla\cdot\bar\u\Big)
+\frac\lambda\gamma \Big(\big(\p_t + \u\cdot\nabla\p\big)\nabla\p,\bar\u \Big)&=&0, 
\\ \hueco
 \Big(\nabla\cdot \u, \bar p\Big) &=& 0 ,
\\ \hueco\dis
\frac1\gamma\Big(\p_t + \u\cdot\nabla\p,\bar \p\Big)
+\Big(\nabla \psi, \nabla \bar\p\Big)
+ \dfrac1{\varepsilon^2}\Big(\f(\nabla\p),\nabla\bar\p\Big)
&=& 0,  
\\ \hueco
\Big(\nabla\p,\nabla \bar \psi \Big)
- \Big(\psi, \bar \psi \Big)
&=&0, 
\end{array}\right.
\eeq
for any 
$ (\bar\u,\bar p,\bar\p,\bar \psi)\in \U_{h0}\times
P_{h0} \times  \Phi_{h}\times \Psi_{h} $,  
where $ \U_{h0}:=\U_h\cap\H^1_0(\Omega)$ and $ P_{h0}:=P_h\cap L^2_0(\Omega)$. Hereafter  $\Big(\cdot,\cdot\Big)$ denotes  the 
  $L^2(\Omega)$-scalar product and  $c\Big(\cdot,\cdot,\cdot\Big)$ is the  trilinear 
antisymmetric form defined by
\beq
c\Big(\u,\v,\w\Big)=\Big((\u\cdot\nabla)\v,\w\Big)+\frac{1}{2}
\Big((\nabla\cdot\u)\,\v,\w\Big)\quad\forall\,
\u,\v,\w\in\U_h.
\eeq
On the other hand, from (\ref{dissipative-tensor}) and the symmetry of $\sigma^d(D(\u),\nabla\p)$, one has
\beq
\ba{c}
 -\Big(\na\cdot\sigma^d(D(\u),\nabla\p),\bar\u\Big)=
\Big(\sigma^d(D(\u),\nabla\p),D(\bar\u)\Big)
\\ \hueco
= \mu_1\Big( (\nabla\p^tD(\u)\nabla\p), (\nabla\p^tD(\bar\u)\nabla\p)\Big)
+\mu_4 \Big (D(\u), D(\bar\u)\Big) 
+2\,\mu_5\Big(D(\u)\nabla\p, D(\bar\u)\nabla\p \Big).
\ea
\eeq

We assume the
following  \textit{inf-sup} stability  condition:
\begin{itemize}

\item[$\blacktriangleright$]  For $(\U_{h0}, P_{h0})$:
  $\displaystyle\sup_{\u\in\U_{h0}}
\frac{(p,\na\cdot\u)}{\|\u\|_{H_0^1}}\ge \beta_1 \|p\|_{L^2} $
 for all $p\in P_{h0}$.


\end{itemize}
The following choices could be considered \cite{girault-raviart}:
\begin{itemize}

\item[$\blacktriangleright$]   $(P_1\rm{-bubble}) \times P_1  \hbox{ for $(\U_h,P_h)$}, 
\quad P_1\times P_1  \hbox{ for $(\Phi_h, \Psi_h)$}.$

\end{itemize}

\begin{lemma}
Each possible  solution
$(\u(t),p(t),\p(t), \psi(t))$ of the
space-discrete scheme \eqref{space-scheme} satisfies the following space-discrete version of the energy law \eqref{energy_equality}:
\begin{equation}\label{induction}
\frac{d}{dt} E_{tot} \Big(\u(t),\nabla\p(t), \psi(t) \Big)
 +  \Big( \sigma^d(D(\u(t)),\nabla\p(t)), D(\u(t)) \Big) 
 + \frac\lambda\gamma \left\| \p_t(t) + \u(t)\cdot\nabla\p(t) \right\|_{L^2}^2 = 0,
\end{equation}
where
\beq
E_{tot}(\u,\nabla\p, \psi):=E_{kin}(\u)+\lambda \Big(E_{ela}(\psi) + E_{pen}(\nabla\p) \Big),
\eeq
with
\beq
E_{kin}(\u)=\displaystyle \frac{1}{2} \|\u\|_{L^2}^2,\quad E_{ela}(\psi)= \frac{1}{2} \| \psi \|_{L^2}^2,\quad E_{pen}(\nabla\p)=\frac 1{\varepsilon^2}\int_{\Omega}F(\nabla\p).
\eeq
\end{lemma}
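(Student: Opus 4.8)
The plan is to test each equation of the variational formulation \eqref{space-scheme} with the natural energy test functions --- the unknowns themselves or their time derivatives --- and to add the resulting identities so that the nonlinear convective contributions cancel and the coupling terms collapse into the dissipative quadratic term appearing in \eqref{induction}. Throughout I use that the finite element spaces $\U_{h0},P_{h0},\Phi_h,\Psi_h$ are fixed in time, so that $\p_t(t)\in\Phi_h$ and $\psi_t(t)\in\Psi_h$ are admissible test functions.

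First I would test the momentum equation (first line of \eqref{space-scheme}) with $\bar\u=\u$. The antisymmetry of the trilinear form yields $c(\u,\u,\u)=0$, and testing the incompressibility equation (second line) with $\bar p=p\in P_{h0}$ cancels the pressure term $(p,\nabla\cdot\u)$. Using the symmetry of the $L^2$-product to rewrite $\big((\p_t+\u\cdot\nabla\p)\nabla\p,\u\big)=\big(\p_t+\u\cdot\nabla\p,\u\cdot\nabla\p\big)$, this gives
\beq\label{plan-A}
(\u_t,\u)+\big(\sigma^d(D(\u),\nabla\p),D(\u)\big)+\frac\lambda\gamma\big(\p_t+\u\cdot\nabla\p,\,\u\cdot\nabla\p\big)=0.
\eeq

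Next I would test the $\p$-equation (third line) with $\bar\p=\p_t$, obtaining
\beq\label{plan-B}
\frac1\gamma\big(\p_t+\u\cdot\nabla\p,\,\p_t\big)+\big(\nabla\psi,\nabla\p_t\big)+\frac1{\varepsilon^2}\big(\f(\nabla\p),\nabla\p_t\big)=0.
\eeq
The step I expect to be the crux is handling the term $(\nabla\psi,\nabla\p_t)$: it cannot be identified with a time derivative directly, since the energy stores $\|\psi\|_{L^2}^2$ rather than a gradient norm of $\p$. The resolution is to differentiate the $\psi$-equation (fourth line) in time --- legitimate because it holds for every $\bar\psi\in\Psi_h$ and all $t$ with time-independent spaces --- giving $(\nabla\p_t,\nabla\bar\psi)=(\psi_t,\bar\psi)$, and then to choose $\bar\psi=\psi$, which produces $(\nabla\psi,\nabla\p_t)=(\psi_t,\psi)$.

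Substituting this into \eqref{plan-B}, multiplying by $\lambda$, and adding the result to \eqref{plan-A}, the two coupling terms combine as
$$
\frac\lambda\gamma\Big[\big(\p_t+\u\cdot\nabla\p,\u\cdot\nabla\p\big)+\big(\p_t+\u\cdot\nabla\p,\p_t\big)\Big]=\frac\lambda\gamma\big\|\p_t+\u\cdot\nabla\p\big\|_{L^2}^2.
$$
Finally, recognizing $(\u_t,\u)=\frac{d}{dt}E_{kin}(\u)$, $\lambda(\psi_t,\psi)=\lambda\frac{d}{dt}E_{ela}(\psi)$, and --- using $\f(\n)=\nabla_\n F(\n)$ --- $\frac\lambda{\varepsilon^2}(\f(\nabla\p),\nabla\p_t)=\lambda\frac{d}{dt}E_{pen}(\nabla\p)$, the sum of the two identities is precisely \eqref{induction}. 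The only genuinely delicate points are the admissibility of $\p_t,\psi_t$ as test functions and the time differentiation of the discrete constraint, both of which rely on the finite element spaces being independent of $t$; everything else is routine manipulation of the bilinear and trilinear forms.
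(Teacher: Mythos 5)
Your proof is correct and takes essentially the same route as the paper: the same test functions (equivalently, $(\u,p,\p_t,\lambda\psi)$), the same crucial step of differentiating the discrete constraint $(\nabla\p,\nabla\bar\psi)-(\psi,\bar\psi)=0$ in time so that $(\nabla\psi,\nabla\p_t)$ can be converted into $(\psi_t,\psi)=\frac{d}{dt}\frac12\|\psi\|_{L^2}^2$, and the same use of $c(\u,\u,\u)=0$ together with the chain-rule identity $\big(\f(\nabla\p),\nabla\p_t\big)=\frac{d}{dt}\int_\Om F(\nabla\p)$. The only cosmetic difference is that you multiply the $\p$-equation by $\lambda$ rather than scaling the test function, which is the same computation.
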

\begin{proof}
At the initial time we have $\psi(0)=-\Delta\p(0)$, so we can replace equation \eqref{space-scheme}$_4$ by its time derivative
$$
\Big(\nabla\p_t,\nabla \bar \psi \Big) - \Big(\psi_t, \bar \psi \Big)=0,
$$
and then taking $ (\bar\u,\bar p,\bar\p,\bar \psi)
=(\u(t), p(t),\p_t(t) ,\lambda\, \psi(t)) $
 as test functions in the space-discrete scheme \eqref{space-scheme} and  using the  equalities
$$
c\Big( \u(t),\u(t),\u(t) \Big)=0,
$$
and
\begin{equation}\label{deriv-pot}
\Big(\f(\nabla\p(t)),\frac{d}{dt} \nabla\p(t) \Big) =\frac{d}{dt}
\int_\Om F(\nabla\p(t)),
\end{equation}
one arrives at \eqref{induction}.
\end{proof}
\begin{remark}
Since 
$$
\Big( \sigma^d(D(\u(t)),\nabla\p(t)), D(\u(t)) \Big) \ge \mu_4 \int_\Om |D(\u(t))|^2,
$$
from \eqref{induction}, one has the following energy inequality
$$
\frac{d}{dt} E_{tot}\Big(\u(t),\nabla\p(t), \psi(t) \Big)
 +  \mu_4 \| D(\u(t)) \|_{L^2}^2
+ \frac\lambda\gamma \left\| \p_t(t) + \u(t)\cdot\nabla\p(t) \right\|_{L^2}^2
 \le  0.
$$
\end{remark}

\subsection{Generic second order in time scheme}
We assume an uniform partition of $[0, T]$:  $t_n = nk$, with  $k =T/N$ denoting the time step. We consider fully coupled second order in time semi-implicit finite difference schemes,  defined as:
\medskip

Given $(\uu^{n},\p^{n}, \psi ^n)$, compute 
$(\uu^{n+1}, p^{n+\frac12}, \p^{n+1}, \psi ^{n+\frac12}) \in \U_{h0}\times P_{h0}\times \Phi_h \times W_h$
such that for any 
$(\bar\uu, \bar p, \bar\p, \bar \psi) \in \U_{h0}\times P_{h0}\times \Phi_h \times W_h$:
\beq\label{CH4:SMA_generic_scheme}
\left\{\ba{rcl}
 \dis 
 \left(\delta_t\uu^{n+1} , \bar\uu\right) 
 + c \Big(\widetilde\uu,\uu^{n+\frac12},\bar\uu \Big) +
  \Big(\sigma^d(D(\uu^{n+\frac12}),\nabla\widetilde\p) , D(\bar\uu)\Big) &&
  \\ \dis
  -\Big( p^{n+\frac12} , \nabla\cdot \bar\uu \Big) 
  + \frac\lambda\gamma \Big(\big(\delta_t\p^{n+1} + \uu^{n+\frac12}\cdot \nabla\widetilde\p\big)\nabla\widetilde\p, \bar\uu\Big) &=& 0,
\\ \hueco
\Big(\nabla\cdot \uu^{n+\frac12} , \bar p \Big) &=& 0 ,
\\ \hueco
\dis \frac1\gamma\Big(\delta_t\p^{n+1} + \uu^{n+\frac12}\cdot \nabla\widetilde\p, \bar \p \Big) 
+  \Big(\nabla \psi ^{n+\frac12}, \nabla \bar\p \Big) 
 + \dfrac1{\eps^2}\Big(\f^k(\nabla\p^{n+1},\nabla\p^n), \nabla\bar\p \Big)
&=&0,
\\ \hueco
\left(\psi ^{n+1},\bar \psi\right)
-\left(\nabla\p^{n+1},\nabla\bar \psi\right)
 &=& 0 \ea\right.
\eeq
where $\delta_t$ denotes the discrete time derivative ($\delta_t a^{n+1}:= (a^{n+1} - a^n)/k$), unknown  $p^{n+\frac12}$ is an approximation  at midpoint $t^{n+\frac12}:=(t^n+t^{n+1})/2$ (directly computed), while $\uu^{n+\frac12}:=(\uu^{n+1}+\uu^{n})/2$ and $\psi ^{n+\frac12}:=(\psi ^{n+1}+ \psi ^{n})/2$.

To assure the second order accuracy of the previous scheme, $\f^k(\nabla\p^{n+1},\nabla\p^n)$,  $\widetilde\uu$ and $\nabla\widetilde\p$ have to be defined as second order approximations of $\f(\nabla\p(t^{n+\frac12}))$,  $\uu(t^{n+\frac12})$ and $\nabla\p(t^{n+\frac12})$, respectively.

 \begin{lemma}
The following discrete energy inequality holds:
\beq\label{SO_energy_law}
\delta_tE_{tot}(\u^{n+1},\nabla\p^{n+1}, \psi ^{n+1})
+\mu_4\|D(\u^{n+\frac12})\|^2_{L^2(\Omega)}
+\frac\lambda\gamma\big\|\delta_t\p^{n+1} + \uu^{n+\frac12}\cdot \nabla\widetilde\p \big\|^2_{L^2(\Omega)}
+ \dfrac{\lambda}{\varepsilon^2}\, ND_{phobic}^{n+1} \le 0,
\eeq
where 
$$
ND_{phobic}^{n+1}=\int_\Om \f^k(\nabla\p^{n+1},\nabla\p^n)\cdot\delta_t\nabla\p^{n+1}
 - \delta_t \left(\int_\Om F(\nabla\p^{n+1})\right). 
$$
\end{lemma}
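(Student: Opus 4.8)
The plan is to mirror the continuous-time energy lemma: I would test the four lines of \eqref{CH4:SMA_generic_scheme} against a single carefully weighted tuple and add them up. The natural choice is
$$
(\bar\u,\bar p,\bar\p,\bar\psi)=\big(\u^{n+\frac12},\,p^{n+\frac12},\,\lambda\,\delta_t\p^{n+1},\,\lambda\,\psi^{n+\frac12}\big).
$$
The factor $\lambda$ on the last two slots reproduces the weights of $E_{ela}$ and $E_{pen}$ inside $E_{tot}$, while testing the $\p$-equation against $\delta_t\p^{n+1}$ (rather than against $\psi$) is precisely what will allow the coupling terms to recombine. The computational backbone is the midpoint identity $\big(\delta_t a^{n+1},a^{n+\frac12}\big)=\tfrac12\,\delta_t\|a^{n+1}\|_{L^2}^2$.

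Applying this identity to the discrete inertia term in \eqref{CH4:SMA_generic_scheme}$_1$ yields $\delta_t E_{kin}(\u^{n+1})$. The trilinear term drops out because $c\big(\widetilde\u,\u^{n+\frac12},\u^{n+\frac12}\big)=0$ by antisymmetry; the two pressure contributions cancel thanks to $\big(\nabla\cdot\u^{n+\frac12},p^{n+\frac12}\big)=0$ from \eqref{CH4:SMA_generic_scheme}$_2$; and the dissipative stress produces $\big(\sigma^d(D(\u^{n+\frac12}),\nabla\widetilde\p),D(\u^{n+\frac12})\big)$, which is bounded below by $\mu_4\|D(\u^{n+\frac12})\|_{L^2}^2$. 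The decisive algebraic step is that the momentum coupling term, tested against $\u^{n+\frac12}$, gives $\tfrac\lambda\gamma\big(\delta_t\p^{n+1}+\u^{n+\frac12}\cdot\nabla\widetilde\p,\,\u^{n+\frac12}\cdot\nabla\widetilde\p\big)$, while the convective term of \eqref{CH4:SMA_generic_scheme}$_3$ tested against $\lambda\,\delta_t\p^{n+1}$ gives $\tfrac\lambda\gamma\big(\delta_t\p^{n+1}+\u^{n+\frac12}\cdot\nabla\widetilde\p,\,\delta_t\p^{n+1}\big)$; their sum is the perfect square $\tfrac\lambda\gamma\big\|\delta_t\p^{n+1}+\u^{n+\frac12}\cdot\nabla\widetilde\p\big\|_{L^2}^2$.

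It remains to deal with the diffusion term of \eqref{CH4:SMA_generic_scheme}$_3$, namely $\lambda\big(\nabla\psi^{n+\frac12},\nabla\delta_t\p^{n+1}\big)$, which I would convert into $\lambda\,\delta_t E_{ela}(\psi^{n+1})$ using \eqref{CH4:SMA_generic_scheme}$_4$. Since that relation holds at both levels $n$ and $n+1$ (provided $\psi^0$ is initialized by $(\psi^0,\bar\psi)=(\nabla\p^0,\nabla\bar\psi)$, the discrete analogue of $\psi(0)=-\Delta\p(0)$), subtracting and dividing by $k$ gives $\big(\delta_t\psi^{n+1},\bar\psi\big)=\big(\nabla\delta_t\p^{n+1},\nabla\bar\psi\big)$; taking $\bar\psi=\lambda\,\psi^{n+\frac12}$ and applying the midpoint identity delivers $\lambda\,\delta_t E_{ela}(\psi^{n+1})$. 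Collecting all contributions reproduces \eqref{SO_energy_law} as an exact identity with the full $\big(\sigma^d,D(\u^{n+\frac12})\big)$ in place of $\mu_4\|D(\u^{n+\frac12})\|_{L^2}^2$, and the stated inequality follows from the lower bound on $\sigma^d$.

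The single point where the discrete argument genuinely diverges from the continuous one, and the main subtlety to handle, is the penalization term $\tfrac\lambda{\eps^2}\big(\f^k(\nabla\p^{n+1},\nabla\p^n),\nabla\delta_t\p^{n+1}\big)$. In continuous time, identity \eqref{deriv-pot} lets $\f(\nabla\p)\cdot\tfrac{d}{dt}\nabla\p$ telescope exactly into $\tfrac{d}{dt}\int_\Om F(\nabla\p)$, but for a generic second-order approximation $\f^k$ no such discrete chain rule is available. This is exactly why $ND_{phobic}^{n+1}$ is defined as it is: by construction it records the defect, so that $\tfrac\lambda{\eps^2}\big(\f^k,\delta_t\nabla\p^{n+1}\big)=\lambda\,\delta_t E_{pen}(\nabla\p^{n+1})+\tfrac\lambda{\eps^2}ND_{phobic}^{n+1}$, and the mismatch is carried verbatim to the left-hand side of \eqref{SO_energy_law} with no estimate required. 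Consequently the lemma is valid for \emph{any} second-order $\f^k$; genuine energy decay is a separate question, demanding a specific construction of $\f^k$ for which $ND_{phobic}^{n+1}\ge 0$, which I would address afterwards.
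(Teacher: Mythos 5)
Your proof is correct and follows essentially the same route as the paper's: the same test functions $(\bar\u,\bar p,\bar\p,\bar\psi)=(\u^{n+\frac12},p^{n+\frac12},\lambda\,\delta_t\p^{n+1},\lambda\,\psi^{n+\frac12})$, the same induction-plus-initialization argument that lets the fourth equation be replaced by its discrete time derivative, and the same use of $c\big(\widetilde\u,\u^{n+\frac12},\u^{n+\frac12}\big)=0$ together with the lower bound $\big(\sigma^d,D(\u^{n+\frac12})\big)\ge\mu_4\|D(\u^{n+\frac12})\|_{L^2}^2$. The details you make explicit (the midpoint identity, the perfect-square recombination of the two coupling terms, and $ND_{phobic}^{n+1}$ as the defect of the discrete chain rule) are precisely what the paper's terser proof leaves implicit.
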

\begin{proof}
Considering $\psi ^0=P^{L^2}_{\Psi_{h}}(-\Delta\p^0)$ (where $P^{L^2}_{\Psi_{h}}$ denotes the $L^2$-projection onto $\Psi_{h}$), by induction and using from the previous time step that 
$$
\left(\psi ^{n},\bar \psi\right)-(\nabla\p^{n},\nabla\bar \psi)=0,
$$
we can replace equation \eqref{CH4:SMA_generic_scheme}$_5$ by its discrete time derivative 
$$
\left(\delta_t \psi ^{n+1},\bar \psi\right)
-\left(\delta_t\p^{n+1},\nabla\bar \psi\right)
 = 0 .
$$
Then, taking as test functions in the scheme (\ref{CH4:SMA_generic_scheme})
$$ (\bar\u,\bar p,\bar\p,\bar \psi)
=(\u^{n+\frac12}, p^{n+\frac12},\lambda\,\delta_t \p^{n+1} , \lambda\,\psi^{n+\frac12}),
$$
the term $\left(\delta_t\p^{n+1},\nabla \psi^{n+\frac12}\right)
$ cancels, 
and by using the  expressions
\beq
c\Big( \widetilde\u,\u^{n+\frac12},\u^{n+\frac12} \Big)=0,
\eeq
and
\beq
\Big( \sigma^d(D(\u^{n+\frac12}),\nabla\widetilde\p), D(\u^{n+\frac12}) \Big) \ge \mu_4 \int_\Om |D(\u^{n+\frac12})|^2,
\eeq
one arrives at \eqref{SO_energy_law}.
\end{proof}

\begin{definition}
The  scheme \eqref{CH4:SMA_generic_scheme} is energy-stable if it holds
\beq\label{CH4:SMA_generic_energy_law}
\delta_tE_{tot}(\u^{n+1},\nabla\p^{n+1}, \psi ^{n+1})
 + \mu_4\|D(\u^{n+\frac12})\|_{L^2(\Om)}^2
 +\frac\lambda\gamma\big\|\delta_t\p^{n+1} + \uu^{n+\frac12}\cdot \nabla\widetilde\p\big\|^2_{L^2(\Omega)}
 \le 0.
\eeq
In particular, energy-stable schemes satisfy the energy decreasing in time property,
$$
E_{tot}(\u^{n+1},\nabla\p^{n+1}, \psi ^{n+1}) \le E_{tot}(\u^{n},\nabla\p^{n}, \psi ^{n}), \quad\pato n.
$$
\end{definition}
In particular, depending on the approximation considered of $\f^k(\nabla\p^{n+1},\nabla\p^n)$ we will obtain different numerical schemes, with different discrete energy laws. We refer the reader to \cite{GK} for more detailed information about different ways of efficient handling this term.

\begin{lemma}[Global energy-stability] Assuming the  scheme \eqref{CH4:SMA_generic_scheme} is energy-stable, then the following estimates hold:
$$
\u^{n+1} \hbox{  in } l^2(0,T;\H_0^1),
\quad \psi^{n+1} \hbox{ in }  l^\infty(0,T;\L^2),
\quad\mbox{ and }\quad
\nabla\p^{n+1}\hbox{ in } l^4(0,T;\L^4).
$$
\end{lemma}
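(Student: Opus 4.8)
The plan is to derive all three bounds from the single inequality \eqref{CH4:SMA_generic_energy_law} by telescoping in time. First I fix $m$ with $1\le m\le N$, multiply \eqref{CH4:SMA_generic_energy_law} by the time step $k$ and sum over $n=0,\dots,m-1$. Since $\delta_t$ is the discrete time derivative the leading term telescopes and the two remaining terms are nonnegative, so I obtain simultaneously the pointwise-in-time bound $E_{tot}(\u^{m},\nabla\p^{m},\psi^{m})\le E_{tot}(\u^{0},\nabla\p^{0},\psi^{0})$ and the cumulative dissipation bound
$$
k\sum_{n=0}^{N-1}\Big(\mu_4\|D(\u^{n+\frac12})\|_{L^2}^2+\tfrac\lambda\gamma\big\|\delta_t\p^{n+1}+\u^{n+\frac12}\cdot\nabla\widetilde\p\big\|_{L^2}^2\Big)\le E_{tot}(\u^{0},\nabla\p^{0},\psi^{0}).
$$
The right-hand side is finite and independent of $k$ and $h$ as soon as the initial data satisfy $\u_0\in\L^2$, $\psi^0\in\L^2$ and $\nabla\p_0\in\L^4$.

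From the pointwise bound I read off two of the estimates using the explicit form of $E_{tot}$. The elastic part gives $\tfrac\lambda2\|\psi^{m}\|_{L^2}^2=\lambda E_{ela}(\psi^{m})\le E_{tot}(\u^{0},\nabla\p^{0},\psi^{0})$, so $\{\psi^{m}\}$ is bounded in $l^\infty(0,T;\L^2)$. For the $\L^4$ control of $\nabla\p$ the key observation is the coercivity of the double well: with $a=|\n|^2\ge 0$ one has $(a-1)^2\ge\tfrac12 a^2-1$, that is $F(\n)\ge\tfrac18|\n|^4-\tfrac14$. Integrating over $\Om$ yields $\tfrac18\|\nabla\p^{m}\|_{\L^4}^4\le\int_\Om F(\nabla\p^{m})+\tfrac14|\Om|=\varepsilon^2 E_{pen}(\nabla\p^{m})+\tfrac14|\Om|$, so $\{\nabla\p^{m}\}$ is uniformly bounded in $\L^4$. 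As there are only $N=T/k$ time levels, this $l^\infty(0,T;\L^4)$ bound implies in particular the claimed $l^4(0,T;\L^4)$ bound, with a constant independent of $k$ and $h$.

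The velocity estimate comes instead from the cumulative dissipation bound. Its first term controls the deformation tensor of the midpoint velocities, and since $\u^{n+\frac12}\in\U_{h0}\subset\H_0^1(\Om)$, Korn's inequality (in the sharp form $\|\nabla\u^{n+\frac12}\|_{L^2}^2\le 2\|D(\u^{n+\frac12})\|_{L^2}^2$ for fields vanishing on $\partial\Om$) together with Poincar\'e's inequality gives $\|\u^{n+\frac12}\|_{\H_0^1}\le C\|D(\u^{n+\frac12})\|_{L^2}$. Summing, $\{\u^{n+\frac12}\}$ is bounded in $l^2(0,T;\H_0^1)$, which is the asserted velocity estimate at the midpoints.

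I expect the only genuinely delicate points to be the two inequalities that upgrade the raw energy quantities: the coercivity bound for $F$, which turns the merely $L^2$-type penalization energy into a true $\L^4$ bound on $\nabla\p$, and Korn's inequality, which turns the physical dissipation $\|D(\u^{n+\frac12})\|_{L^2}$ into full $\H^1$ control. A minor subtlety, the scheme being of midpoint type, is that the dissipation naturally bounds the half-step velocities $\u^{n+\frac12}$; this is the standard object in the a priori analysis of Crank--Nicolson-type schemes and is what the velocity estimate refers to. One may pass to the full steps via $\u^{n+1}=2\u^{n+\frac12}-\u^{n}$, at the cost of a constant depending on $T$ and on the initial $\H_0^1$-norm, but this is not needed for the stated conclusion.
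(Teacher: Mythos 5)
Your proposal is correct and follows essentially the same route as the paper: telescope (the paper says ``by induction'') the discrete energy inequality \eqref{CH4:SMA_generic_energy_law} to get an $l^\infty$ bound on $E_{tot}$ plus a cumulative dissipation bound, read off the $\psi$ estimate from $E_{ela}$, use the quartic coercivity of $F$ for the $\L^4$ estimate of $\nabla\p$, and use Korn's inequality for the velocity. Your treatment is in fact more careful than the paper's on two points: you supply an explicit coercivity constant, $F(\n)\ge\tfrac18|\n|^4-\tfrac14$ (the paper's stated inequality $\int_\Om F(\nabla\p)\ge\|\nabla\p\|_{\L^4}^4-|\Om|$ is missing a multiplicative constant), and you correctly observe that the dissipation controls the midpoint velocities $\u^{n+\frac12}$, whereas the paper's proof silently writes $D(\u^{n+1})$ where the energy law only yields $D(\u^{n+\frac12})$.

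One side remark in your last paragraph is, however, wrong and worth correcting: passing from the midpoint bound to a full-step bound via $\u^{n+1}=2\u^{n+\frac12}-\u^n$ does \emph{not} cost merely a constant depending on $T$ and the initial data. Writing $a_n=\|D(\u^n)\|_{L^2}$ and $b_n=\|D(\u^{n+\frac12})\|_{L^2}$, the recursion $a_{n+1}\le 2b_n+a_n$ gives $a_n\le a_0+2\sum_{j<n}b_j\le a_0+2\sqrt{n}\bigl(\sum_j b_j^2\bigr)^{1/2}$, and since only $k\sum_j b_j^2\le C$ is available, this produces factors of $k^{-1}$ in $k\sum_n a_n^2$, which blow up as $k\to 0$ (endpoint oscillations of the form $\u^n=(-1)^n\v$ are invisible to the midpoint dissipation). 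So, uniformly in $k$ and $h$, the $l^2(0,T;\H_0^1)$ estimate is genuinely a statement about the midpoints $\u^{n+\frac12}$; this is all that your argument (and, implicitly, the paper's own proof) establishes, and your main line of reasoning does not depend on the erroneous remark.
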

\begin{proof}
By induction respect to the time step $n$, \eqref{CH4:SMA_generic_energy_law} implies the following
bounds independent of $k,h,\eps$ (if $|\u^0|^2+ |\psi^0|^2\le C$ and  $\int_\Om F(\nabla\p^0)\le C\, \eps^2$, with $C>0$ independent of $k,h$ and $\eps$):
\beq
E_{tot}(\u^{n+1},\nabla\p^{n+1}, \psi ^{n+1}) \le C,\quad k\sum_n\Big(
 \| D(\u^{n+1}) \|_{L^2(\Om)}^2
+ \big\|\delta_t\p^{n+1} + \uu^{n+\frac12}\cdot \nabla\widetilde\p  \big\|_{L^2(\Om)}^2 \Big) \le C,
\eeq
hence the following $(k,h,\eps)$-independent
estimates  hold
\begin{eqnarray*}
  \u^{n+1} \hbox{ and }  \psi ^{n+1}&\hbox{in}& l^\infty(0,T;\L^2),
   \qquad \frac1{\eps^2}\int_\Om F(\nabla\p^{n+1}) \quad\hbox{in}\quad l^\infty(0,T),\\
 D(\u^{n+1}) &\hbox{in}& l^2(0,T;L^2).
 \end{eqnarray*}
In particular, by applying  Korn's inequality to $\u^{n+1}$, one also has the $(k,h,\eps)$-independent estimate 
$$
 \u^{n+1} \quad\hbox{in}\quad l^2(0,T;\H_0^1).
$$
On the other hand, owing to the inequality $\int_\Om F(\nabla\p) \ge \big(\|\nabla\p\|^4_{L^4(\Om)}-|\Om|\big)$, one also has the following $(k,h)$-independent  estimate
$$
 \nabla\p^{n+1}\hbox{ is bounded  in } l^4(0,T;\L^4),
$$
although this bound depends on $\eps$.
\end{proof}

\subsubsection{How to define $\f^k(\nabla\p^{n+1},\nabla\p^{n})$}

There are several possible ways of approximating potential $\f(\nabla\phi)$ (check \cite{GK} for a detailed review). Indeed, in the last years many works from different physical applications have appeared in the literature, presenting new ways of dealing with this kind of potentials.
For the Cahn-Hilliard equation, implicit approximations have been often considered \cite{ELLIOT_FRE1,ELLIOT_FRE2,ELLIOT1,FENG2,COPETTI,HECTOR2} where a Newton method is usually employed in order to compute the nonlinear scheme. There is an implicit-explicit approximation of the potential that does not introduce any numerical dissipation which have been widely used in phase field models 
\cite{DU,FENG,HUA,HYON_KWAK_LIU} and in the Liquid Crystals context \cite{LIN}.
On the other hand, many authors split the potential into a convex and a non-convex part in order to assure the existence of some numerical dissipation to obtain a unconditional energy-stable scheme \cite{EYRE,Becker-Feng-Prohl,HU,KKL,WISE,FILHO,HECTOR}, although the resulting schemes are nonlinear. 
The idea of splitting the potential has been also considered for thin film epitaxy \cite{ref:Xiaoming}. Moreover, some linear schemes have been studied in \cite{SHEN-YANG1, SKJ, KG, KG2, WU_ZWI_ZEE}.

Solvability of scheme \eqref{CH4:SMA_generic_scheme} will depend on the approximation of the potential
term  $\f^k(\nabla\p^{n+1},\nabla\p^{n})$. In the case of energy-stable schemes (non-linear schemes), solvability follows from the discrete energy law \eqref{SO_energy_law} and either an application of the Brouwer fixed-point theorem (cf. Corollary 1.1 of \cite{girault-raviart}) or an application of the Leray-Schauder fixed-point theorem in a finite dimensional setting. But, the uniqueness  of these nonlinear schemes is not clear in general, depending on the behavior of 
\beq
\frac{\lambda}{\eps^2 k}
\Big(\f^k(\nabla\p^{n+1},\nabla\p^{n})-\f^k(\nabla \widetilde\p^{n+1},\nabla\p^{n}),
\nabla\p^{n+1} -  \nabla \widetilde\p^{n+1} \Big)
\eeq
where $\nabla\p^{n+1}$ and  $\nabla \widetilde\p^{n+1}$ are two possible solutions of the scheme. In fact, using a convex-concave semi-implicit approximation (assuming implicitly the convex part and explicitly the concave one) the uniqueness is deduced from the monotony of $\f^k(\cdot,\nabla\p^{n})$ (because the convex part of the potential is treated implicitly). In the case of the concave part treated implicitly, one can not deduce unconditional uniqueness and a constraint of time step $k$ small enough must be imposed.

\subsubsection{How to define $\widetilde\uu$ and $\nabla\widetilde\p$}\label{sec:how}
The scheme \eqref{CH4:SMA_generic_scheme} has been designed as second order approximations of the associated model, but it is necessary to define $\widetilde\uu$ and $\nabla\widetilde\p$ as second order approximations of $\uu(t^{n+\frac12})$ and $\n(t^{n+\frac12})$, respectively. We propose two different ways of dealing with these terms:
\begin{enumerate}
\item The first one consists on choosing a Crank-Nicolson approach,
\begin{equation} \label{CH4:C-N}
\widetilde\uu=(\uu^{n+1} + \uu^n)/2\quad \mbox{ and }\quad \dis\nabla\widetilde\p=(\nabla\p^{n+1} + \nabla\p^n)/2.
\end{equation}
In this case we obtain a nonlinear one-step scheme. 
\item The second way consists on using a \textbf{BDF2} approximation for each one of the unknowns, i.e., 
\begin{equation} \label{CH4:BDF2}
\widetilde\uu=(3\uu^n - \uu^{n-1})/2\quad \mbox{ and } \quad \nabla\widetilde\p=(3\nabla\p^n - \nabla\p^{n-1})/2.
\end{equation}
Then, using these two-step approximations, the linearity and the solvability conditions of the schemes will depend just on the choice of the potential term $f^k(\nabla\p^{n+1},\nabla\p^n)$.
\end{enumerate}

\section{Numerical simulations}\label{sec:num_sim}

In this section we study the behavior of the numerical schemes presented through the paper. In particular, we will focus on the second order scheme obtained using the generic scheme \eqref{CH4:SMA_generic_scheme} with the linear potential approximation \textbf{OD2} (whose efficiency have been studied for the Cahn-Hilliard equation in \cite{GK}): 
\begin{equation}
\label{f-od1}
 \f^k(\nabla\p^{n+1},\nabla\p^{n})=\f(\nabla\p^{n}) + \frac12  \f'(\nabla\p^{n})(\nabla\p^{n+1}-\nabla\p^{n}),
\end{equation}
that it is a second order in time approximation of the potential $\f(\nabla\p(t^{n+\frac12}))$. In particular, for the Ginzburg-Landau potential $\f(\nabla\phi)$ considered in this paper, a direct computation yields to
\beq\label{OD2}
 \f^k(\nabla\p^{n+1},\nabla\p^{n}) =  (\nabla\p^{n}\cdot \nabla\p^{n+1})\nabla\p^{n} 
  + |\nabla\p^{n}|^2 \frac{\nabla\p^{n+1}-\nabla\p^{n}}{2} -  \frac{\nabla\p^{n+1}+\nabla\p^{n}}{2} .
\eeq
By using ideas from \cite{KG,KG2}, the unique solvability of the scheme \eqref{CH4:SMA_generic_scheme} using the approximation (\ref{OD2}) can be assured under the constraint $k<2\varepsilon^2/\gamma$. Nevertheless, energy-stability of scheme \eqref{CH4:SMA_generic_scheme} and \eqref{OD2}  (possibly assuming constraints of type $k$ small enough  in function of $h$ and $\eps$),  remain as an open problem.

In order to complete a fully linear scheme, we take the linear \textbf{BDF2} approximations presented in \eqref{CH4:BDF2}.

The domain considered is $\Omega=(-1,1)^2$ with an uniform space partition of size $h=1/32$,  the time step set to $k=10^{-5}$ and 
the physical parameters set to  $\mu_1 = \mu_4 = \mu_5 = \lambda = \gamma = 1$
and $ \varepsilon = 0.05$. A finite element approximation is considered in space, using the software \textit{FreeFem++} \cite{Hetch} for carrying out the simulations under the following choices for the discrete spaces:
$$
(\u,p)\sim P_2 \times P_1\quad\mbox{ and }\quad (\p, \psi)\sim  P_1\times P_1.
$$ 
We consider the initial conditions
$$
\p(0) = \sin(x)\cos(y)^2,
\quad \psi(0)=-\Delta\p(0),
\quad \uu_1(0) = 0, 
\quad \uu_2(0) = 0,
 $$
the boundary conditions 
$$
\uu=\nabla\p\cdot \m=(\nabla \psi+\dfrac1{\varepsilon^2}\f(\na\p))\cdot \m = 0 \quad \mbox{ on $\partial\Omega$}
$$ 
(which correspond to \mbox{[D2-N2]} in \eqref{eq:possi_BCs})
and we compute a first step $(\n^1,\u^1)$ using the scheme \eqref{CH4:SMA_generic_scheme} with the Crank-Nicolson approximation described in \eqref{CH4:C-N}, i.e.,
$$
\widetilde\uu=(\uu^{n+1} + \uu^n)/2\quad \mbox{ and }\quad \dis\widetilde\n=(\n^{n+1} + \n^n)/2,
$$
in order to be able to use the \textbf{BDF2} approximation for $\widetilde\uu$ and $\widetilde\n$ in the rest of time iterations.


The dynamics of the layer $\phi$ and the velocity field $\u$ are presented in Figures~\ref{fig:DYN1}-\ref{fig:DYN2} while in Figures~\ref{fig:KIN}-\ref{fig:ELA} we plot the evolution of the kinetic and elastic energy. The dynamics consists on the deformation of the layer configuration to reach an equilibrium configuration, that induces movement in the fluid part. Moreover, the total energy dissipates until there are no changes  on the layer and on the velocity field, i.e., the system is reaching an equilibrium configuration.

\begin{figure}[H]
\begin{center}
\includegraphics[width=1.0\textwidth]{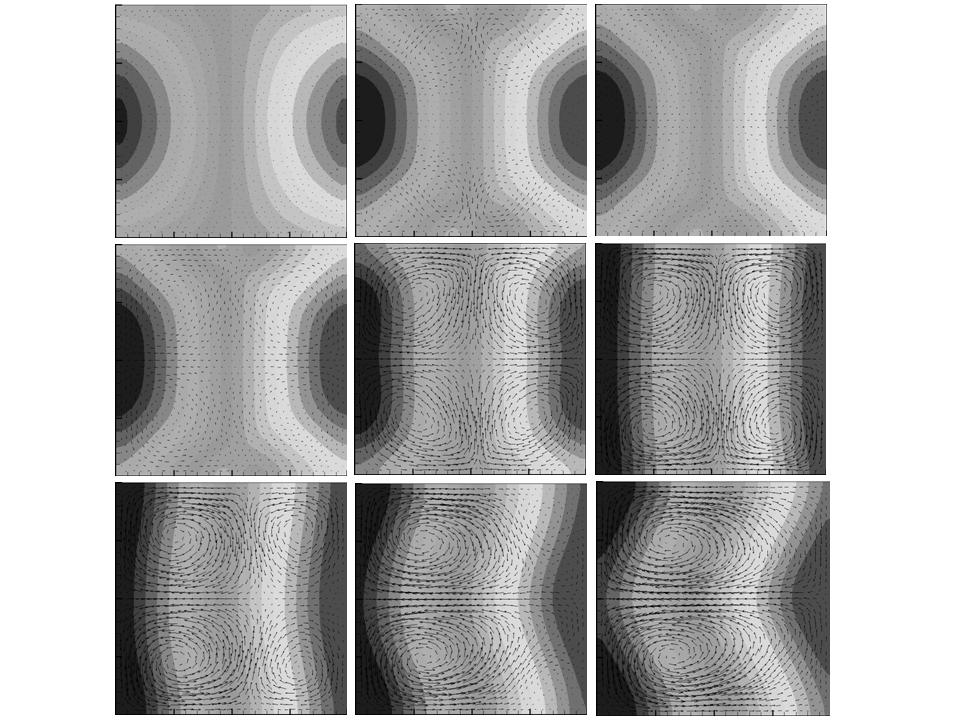}
\caption{From left to right and up to down, the evolution of $\phi$ in time $t=0.00001,\,0.005,\,0.01,\,0.015,\,0.02,\,0.025,\,0.03,\,0.035$ and $0.04$}\label{fig:DYN1}
\end{center}
\end{figure}

\begin{figure}[H]
\begin{center}
\includegraphics[width=0.75\textwidth]{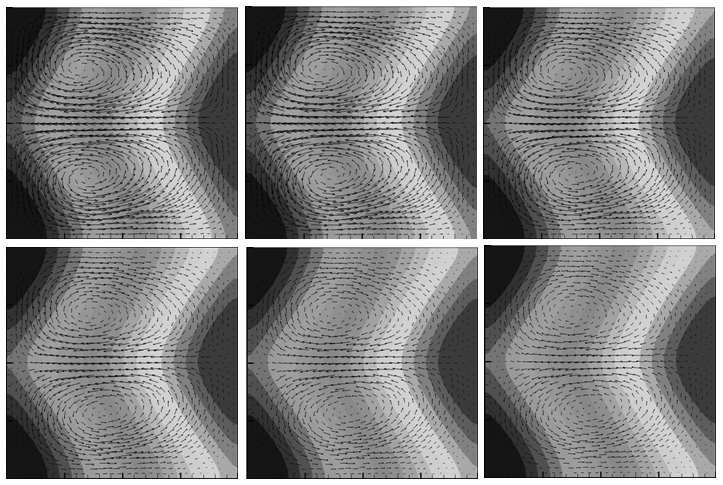}
\caption{From left to right and up to down, the evolution of $\phi$ in time $t=0.045,\,0.05,\,0.06,\,0.07,\,0.08$ and $0.086$}\label{fig:DYN2}
\end{center}
\end{figure}

\begin{figure}[H]
\begin{center}
\includegraphics[width=0.9\textwidth]{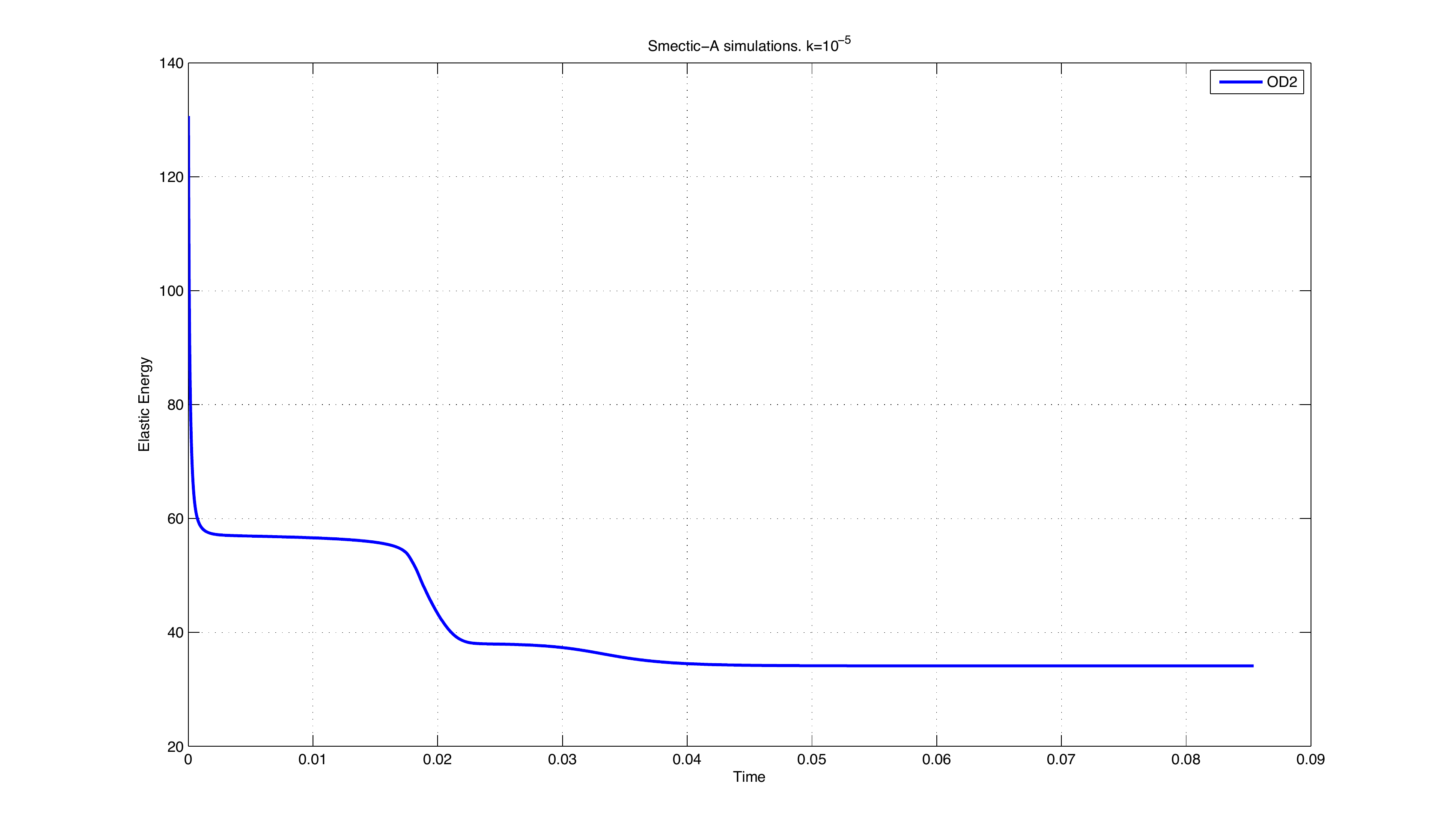}
\caption{Kinetic energy}\label{fig:KIN}
\end{center}
\end{figure}

\begin{figure}[H]
\begin{center}
\includegraphics[width=0.9\textwidth]{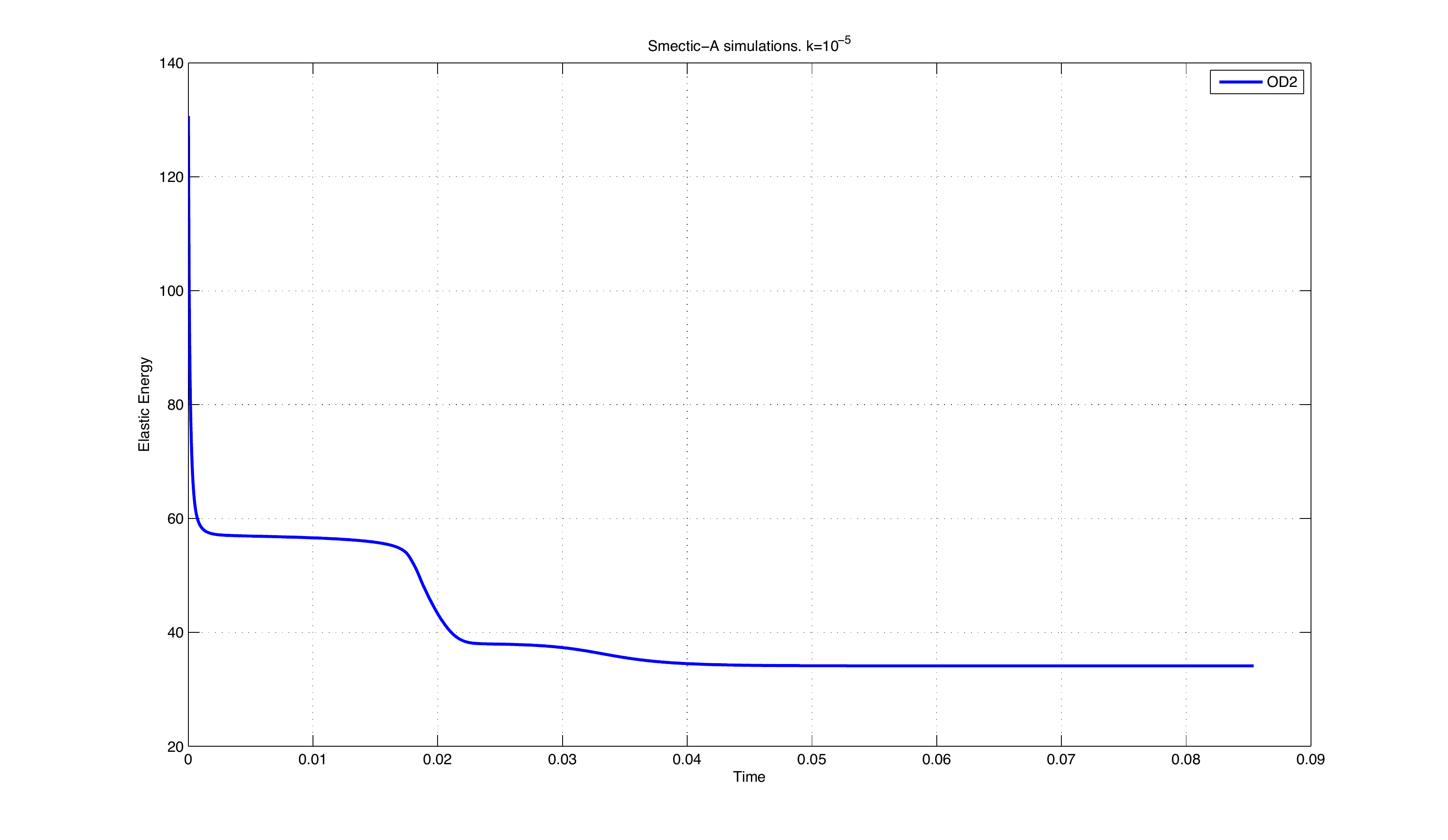}
\caption{Elastic energy}\label{fig:ELA}
\end{center}
\end{figure}

\section{Conclusions}\label{sec:con}

In this paper, we have presented a new reformulation of the Smectic-A liquid crystals system, where a new unknown $\psi=-\Delta\p$ have been introduced in order to arrive at a mixed second-order problem. This new formulation allows us to recover a dissipative energy law, that is in correspondence with the energy law associated to the original problem.

We approximate this new formulation using second-order finite differences in time and by $C^0$-finite elements in space. For this scheme, we deduce a discrete version of the dissipative energy law derived in the continuous problem. 

Finally, numerical simulations are reported to show that the proposed  scheme capture the dynamics of Smectic-A liquid crystals. More extensive numerical tests and studies of other effects, such as the influence of physical parameters and the  interaction with other type of fluids will be also investigated in the future.

\subsection*{Acknowledgements.} 
This research has been  
partially supported by MTM2012-32325 (Ministerio de Econom\'ia y Competitividad, Spain). Giordano Tierra has also been partially supported by ERC-CZ project LL1202 (Ministry of Education, Youth and Sports of the Czech Republic).


\begin{thebibliography}{99}

\bibitem[Badia-Guill\'en-Guti\'errez'11]{Badia-Guillen-Gutierrez}  S. Badia, F. Guill\'en-Gonz\'alez, J.V. Guti\'errez-Santacreu;
\textit{An Overview on Numerical Analyses of Nematic Liquid Crystal Flows}, 
Archives of Computational Methods in Engineering, \textbf{18} (2011) 285-313 

\bibitem[Badia-Guill\'en-Guti\'errez'11]{SKJ} Badia,~S.; Guill\'en-Gonz\'alez,~F.; Guti\'errez-Santacreu,~J.V.;
\textit{Finite element approximation of nematic liquid crystal flows using a saddle-point structure,}
Journal of Computational Physics
\textbf{230}  (2011) 1686-1706.

\bibitem[Becker-Feng-Prohl'08]{Becker-Feng-Prohl} R. Becker, X. Feng, A. Prohl. \emph{Finite element approximations of the Ericksen-Leslie model for nematic liquid crystal flow},
SIAM Journal Numer. Anal. \textbf{46} (2008) 1704-1731.


\bibitem[Climent-Guillen'13]{review_LC} B. Climent-Ezquerra,
 F. Guill\'en-Gonz\'alez.
 \emph{A review of mathematical analysis of nematic and smectic-A liquid crystal models,} European Journal of Applied Mathematics \textbf{10} (2013)  1-21.
 
 \bibitem[Climent-Guillen'14]{cli_gui_14} B. Climent-Ezquerra,
 F. Guill\'en-Gonz\'alez.
 \emph{Convergence to equilibrium for smectic-A liquid crystals in $3D$ domains  without constraints for the viscosity,}  Nonlinear Analysis Theory, Methods and Applications \textbf{102} (2014) 208-219. 
 
 \bibitem[Climent-Guillen'10]{regul-smectic} B. Climent-Ezquerra,
 F. Guill\'en-Gonz\'alez.
 \emph{Global in time solution  and time-periodicity
 for a smectic-A liquid crystal model,} Commun. Pure Appl. Anal. \textbf {9} (2010) 1473-1493.
 


\bibitem[Climent et al.'09]{regul-nematic} B. Climent-Ezquerra, F. Guill\'en-Gonz\'alez, M.J. Moreno-Iraberte. \emph{Regularity and Time-periodicity for a Nematic
Liquid Crystal model}, Nonlinear Analysis Theory, Methods and Applications, \textbf{71} (2009)  539-549



\bibitem[Climent et al.'10]{reprod} B. Climent-Ezquerra, F. Guill\'en-Gonz\'alez,
M.A. Rodríguez Bellido. \emph{Stability for Nematic Liquid Crystals
with stretching terms}, International Journal of Bifurcations and Chaos, \textbf{20} (2010) 2937-2942.


\bibitem[Collings'90]{Collings} P.J. Collings. \emph{Liquid Crystals:
Nature's Delicate Phase of Matter,}
Princeton University Press (1990)

\bibitem[Copetti-Elliot'92]{COPETTI} Copetti,~M.I.M.; Elliott,~C.M.;
\textit{Numerical analysis of the Cahn-Hilliard equation with a logarithmic free energy.} 
Numer. Math.
\textbf{63} (1992) 39-65.


\bibitem[Coutand-Shkoller'01]{Coutand} D. Coutand, S. Shkoller. \emph{Well posedness of the full Ericsen-Leslye model of nematic liquid crystals}, C. R. Acad. Sci. Paris, \textbf{333} (2001) 919-924.

\bibitem[Du-Nicolaides'91]{DU} Du,~Q.; Nicolaides,~R.A.; 
\textit{Numerical analysis of a continuum model of phase transition.}
SIAM Journal Numer. Anal.
\textbf{28} (1991) 1310-1322.


\bibitem[E'97]{E} W. E. \emph{Nonlinear Continuum Theory of Smectic-A Liquid
Crystals}, Arch. Rat. Mech. Anal., \textbf{137} (1997) 159-175.

\bibitem[Elliot-French'89]{ELLIOT_FRE1}{Elliott C.M., French  D.A. }{\it A nonconforming finite-element method for the two-dimensional Cahn-Hilliard equation},
SIAM Journal Numer. Anal. \textbf{26} (1989) 884-903.
\bibitem[Elliot-French'87]{ELLIOT_FRE2}{Elliott C.M., French  D.A. }{\it Numerical studies of the Cahn-Hilliard equation for phase separation},
IMA Journal Applied Math. \textbf{38} (1987) 97-128.

\bibitem[Elliot-French-Milner'89]{ELLIOT1} Elliott,~C.M.; French,~D.A.; Milner,~F.A.;
\textit{A Second Order Splitting Method for the Cahn-Hilliard Equation,}
Numer. Math.
\textbf{54} (1989) 575-590.

\bibitem[Eyre]{EYRE} Eyre,~J.D.;
\textit{An Unconditionally Stable One-Step Scheme for Gradient System,} unpublished, 
www.math.utah.edu/$\sim$eyre/research/methods/stable.ps

\bibitem[Feng'06]{FENG} Feng,~X.;
\textit{Fully discrete finite element approximations of the Navier-Stokes-Cahn-Hilliard diffuse interface model for two-phase fluid flows,}
SIAM J. Numer. Anal.
\textbf{44} (2006) 1049-1072.


\bibitem[Feng-Prohl'04]{FENG2} Feng,~X.; Prohl,~A.; 
\textit{Error analysis of a mixed finite element method for the Cahn-Hilliard equation,}
Numer. Math.
\textbf{99} (2004) 47-84.


 \bibitem[Girault-Raviart'86]{girault-raviart} V. Girault, P.A. Raviart.
\emph{Finite element methods for Navier-Stokes equations: theory and
algorithms.}  Berlin, Springer-Verlag, 1986.

\bibitem[Gomez et al.'08]{HECTOR2} Gomez,~H.; Calo,~V.M.; Bazilevs,~Y.; Hughes,~T.J.R.; 
\textit{Isogeometric analysis of the Cahn-Hilliard phase-field model,}
Comput. Methods Appl. Mech. Engrg.
\textbf{197} (2008) 4333-4352.

\bibitem[Gomez-Hughes'11]{HECTOR} Gomez,~H.; Hughes,~T.J.R.; 
\textit{Provably Unconditionally Stable, Second-order Time-accurate,}
Journal of Computational Physics
\textbf{230}  (2011) 5310-5327.

\bibitem[Guillen-Tierra'13]{KG} Guill\'en-Gonz\'alez,~F.; Tierra,~G.;
\textit{On linear schemes for a Cahn Hilliard Diffuse Interface Model.} 
J. Comput Physics.
\textbf{234} (2013) 140-171.


\bibitem[Guillen-Tierra'14]{KG2} Guill\'en-Gonz\'alez,~F.; Tierra,~G.;
\textit{Second order schemes and time-step adaptivity for Allen-Cahn and Cahn-Hilliard models.}  Submitted.

\bibitem[Hetch'12]{Hetch} Hecht, F.
\emph{New development in freefem++}, J. Numer. Math., \textbf{20} (2012)  251-265

\bibitem[Hu et al.'09]{HU}{Hu Z., Wise S.M., Wang C., Lowengrub J.S. }{\it Stable and efficient finite-difference nonlinear-multigrid schemes for the phase field crystal equation},
Journal of Computational Physics \textbf{228} (2009) 5323-5339


\bibitem[Hua et al.'11]{HUA}Hua,~J.; Lin.~P.; Liu,~C.; Wang,~Q.; 
\textit{Energy law preserving $C0$ finite element schemes for phase field models in two-phase flow computations,}
Journal of Computational Physics
\textbf{230} (2011) 7115-7131.

\bibitem[Hyon-Kwak-Liu'10]{HYON_KWAK_LIU}  {Hyon Y.;  Kwak D.Y.;  Liu C; }  {\it Energetic variational approach in
complex fluids: Maximum dissipation principle}, Discrete and Continuous Dynamical Systems 
 \textbf{26} (2010) 1291-1304.

\bibitem[Kim-Kang-Lowengrub'04]{KKL}{ Kim J.;  Kang K.;  Lowengrub J.;  }{\it Conservative multigrid methods for Cahn-Hilliard fluids},
Journal of Computational Physics \textbf{193} (2004) 511-543.

\bibitem[Lin'89]{lin} F.H. Lin.
\emph{Nonlinear theory of defects in nematic liquid crystals: phase transition and flow phenomena}, Comm.~Pure~Appl.~Math., \textbf{42}
(1989) 789-814.


\bibitem[Lin-Liu'95]{linliu1} F.H. Lin, C. Liu.
\emph{Non-parabolic dissipative systems
modelling the flow of liquid crystals}, Comm.~Pure~Appl.~Math., \textbf{4}
(1995) 501-537.

\bibitem[Lin-Liu'00]{linliu2}  F.H. Lin, C. Liu.
\emph{Existence of solutions for the
Ericksen-Leslie system,}  Arch. Rat. Mech. Anal., \textbf{154} (2000)
135-156.
%
\bibitem[Lin-Liu-Zhang'07]{LIN} Lin,~P.; Liu,~C.; Zhang,~H.;
\textit{An energy law preserving $C^0$ finite element scheme for simulating the kinematic effects in liquid crystal dynamics,}
Journal of Computational Physics
\textbf{227}  (2007) 1411-1427.


\bibitem[Liu'00]{liu} C. Liu. \emph{Dynamic Theory for Incompressible Smectic Liquid Crystals:
          Existence and Regularity,} Discret Contin Dyn S , \textbf{6} (2000) 591-608.
          

\bibitem[Mello-Filho'05]{FILHO} Mello,~E.V.L.; Filho,~O.T.S.;
\textit{Numerical study of the Cahn-Hilliard equation in one, two and three dimensions,}
Physica A
\textbf{347} (2005) 429-443.

 \bibitem[Shen et al.'12]{ref:Xiaoming} Shen,~J.; Wang,~C.; Wang,~X.; Wise,~S.;
\textit{Second-order convex splitting schemes for gradient flows with Enhrich-Schwoebel type energy: application to thin film epitaxy,}
SIAM J. Numer. Anal. 
\textbf{50} (2012)105-125

\bibitem[Shen-Yang'10]{SHEN-YANG1} Shen,~J.; Yang,~X.;
\textit{Numerical approximations of Allen-Cahn and Cahn-Hilliard equations,}
Discrete and continuous dynamical systems
\textbf{28} (2010) 1669-1691.

\bibitem[Tierra-Guillen'14]{GK}  Tierra,~G.; Guill\'en-Gonz\'alez,~F.;
\textit{Numerical methods for solving the Cahn-Hilliard equation and its applicability to related Energy-based models.} 
Archives of Computational Methods in Engineering. DOI:10.1007/s11831-014-9112-1 

\bibitem[Wise'10]{WISE} Wise~S.M.;
\textit{Unconditionally Stable Finite Difference, Nonlinear Multigrid Simulation of the Cahn-Hilliard-Hele-Shaw System of Equations,}
Journal of Scientific Computing
\textbf{44} 1 (2010).
          
  \bibitem[Wu-Segatti]{wu-segatti} A. Segatti, H. Wu.  \emph{Finite dimensional reduction and convergence to equilibrium for incompressible Smectic-A liquid crystal flows,} SIAM J. Math. Anal., \textbf{43}  (2011) 2445-2481.


\bibitem[Wu-van Zwieten-van der Zee'13]{WU_ZWI_ZEE} Wu,~X.; van Zwieten,~G.J.; van der Zee,~K.G.; 
\textit{Stabilized second-order convex splitting schemes for Cahn-Hilliard models with application to diffuse-interface tumor-growth models},
 Int. J. Numer. Meth. Biomed. Engng. 
 \textbf{228}  (2013)    DOI: 10.1002/cnm.2597 


\end{thebibliography}
\end{document}